\tikzset{OuterBoundary/.style={very thick}}
\tikzset{RimHook/.style={cyan, line width=1.2pt, rounded corners=2pt}}
\tikzset{edge/.style={cyan, very thick}}
\tikzset{ball/.style={shape=circle, inner sep=9pt, ball color=magenta!25!white}}
\tikzset{RHiball/.style={ball color=magenta!50!white}}
\tikzset{RHeball/.style={ball color=magenta!25!white}}
\tikzset{apball/.style={ball color=black}}
\tikzstyle{EdgeStyle}=[very thick, color=cyan]
\tikzstyle{VertexStyle}=[shape=circle, inner sep=6pt, ball color=magenta!25!white]
\newcommand\floor[1]{\lfloor#1\rfloor}
\newcommand\ceil[1]{\lceil#1\rceil}
\newcommand\Sq{\mathrm{Sq}}
\crefname{ineq}{Ineq.}{Ineqs.}
\newtheorem{theorem}{Theorem}[section]
\newtheorem{lemma}[theorem]{Lemma}
\newtheorem{conjecture}[theorem]{Conjecture}
\newtheorem{proposition}[theorem]{Proposition}
\theoremstyle{definition}
\newtheorem{example}[theorem]{Example}
\numberwithin{equation}{section}
\numberwithin{figure}{section}
\title[Non-Schur positivity of chromatic symmetric functions]
{Non-Schur positivity\\of chromatic symmetric functions}
\author[D.G.L. Wang]{David G.L. Wang$^\dag$$^\ddag$}
\address{
$^\dag$School of Mathematics and Statistics, Beijing Institute of 
Technology, 102488 Beijing, P. R. China\\
$^\ddag$Beijing Key Laboratory on MCAACI, Beijing Institute of 
Technology, 102488 Beijing, P. R. China}
\email{glw@bit.edu.cn; david.combin@gmail.com}
\author[M.M.Y. Wang]{Monica M.Y. Wang}
\address{
School of Mathematics and Statistics, Beijing Institute of 
Technology, 102488 Beijing, P. R. China}
\email{mengyu919@bit.edu.cn}
\keywords{chromatic symmetric function, $e$-positivity, Schur positivity, Young tableau}
\subjclass[2010]{05E05 05A17 05A15}
\thanks{Corresponding author: David G.L. Wang (glw@bit.edu.cn).
This paper was supported by General Program of National Natural 
Science Foundation of China (Grant No.\ 11671037).}
\begin{document}
\maketitle
\begin{abstract}
We provide a formula for every Schur coefficient in the chromatic symmetric function of a graph in terms of special rim hook tabloids. This formula is useful in confirming the non-Schur positivity of the chromatic symmetric function of a graph, especially when Stanley's stable partition method does not work. As applications, we determine Schur positive fan graphs and Schur positive complete tripartite graphs. We show that any squid graph obtained by adding $n$ leaves to a common vertex on an $m$-vertex cycle is not Schur positive if $m\ne 2n-1$, and conjecture that neither are the squid graphs with $m=2n-1$.
\end{abstract}

\tableofcontents

\section{Introduction}
Stanley~\cite{Sta95} introduced the {\it chromatic symmetric function}
for a simple graph $G$ as
\[
X_G=X_G(x_1, x_2, \ldots )=\sum_{\kappa}\prod_{v\in V(G)}x_{\kappa(v)}
\]
where the sum is over all proper colorings $\kappa$.
For example, the chromatic symmetric function of the complete graph $K_n$ 
is $n!e_n$, where $e_n=\prod_{i\ge 1}x_i$ is the $n$th elementary symmetric function.
It is a generalization of the chromatic polynomial $\chi_G(z)$ in the sense that $X_G(1^k)=\chi_G(k)$.
Shareshian and Wachs~\cite{SW16} refined
the concept of chromatic symmetric functions, which was called the
\emph{chromatic quasisymmetric function} of $G$, 
by considering all acyclic orientations of $G$ and
introducing another parameter $q$ to track the number of directed edges $(i,j)$
for which $i<j$. 
Ellzey~\cite{Ell17,Ell18D} further generalized these functions by
allowing orientations with directed cycles; see also
Alexandersson and Panova~\cite{AP18} 
for the same generalization from another perspective.

Let $\Lambda_n(x_1,x_2,\ldots)$ be the vector space of symmetric functions 
of degree $n$.
The Schur symmetric functions are considered to be 
the most important basis for $\Lambda_n$,
for its ubiquitousness in representation theory, 
mathematical physics and other areas,
and the crucial role it plays in understanding the representation theory 
of the symmetric group; see Macdonald~\cite{Mac95B,Mac15B}.
In fact,
every irreducible homogeneous polynomial representation $\phi$ of
the general linear group $GL_n(\mathbb{C})$ is given by
\[
\mathrm{char}(\phi)(x)=s_\lambda(x_1, x_2, \dots, x_n)
\]
for some partition $\lambda$ of $n$, 
where $s_\lambda(x_1, x_2, \dots, x_n)$ is a Schur polynomial.
Let $\mathfrak{S}_n$ be the symmetric group of order $n$.
Then for any $\mathfrak{S}_n$-module $M$,
\[
M=\bigoplus_{\lambda\vdash n}(S^{\lambda})^{\bigoplus c_\lambda} 
\quad\iff\quad
\mathrm{ch}_M(x)=\sum_{\lambda\vdash n}c_\lambda s_\lambda(x),
\]
where $S^\lambda$ are irreducible $\mathfrak{S}_n$-modules
and $\mathrm{ch}_M(x)$ is the Frobenius characteristic of~$M$; 
see Stanley~\cite{Sta99B} and Sagan~\cite{Sag01B}.

For any symmetric function basis $b_{\lambda}$, 
the graph $G$ is said to be {\it $b$-positive} 
if the expansion of $X_G$ in $b_{\lambda}$ has nonnegative coefficients. 
Note that $e$-positive graphs are Schur positive,
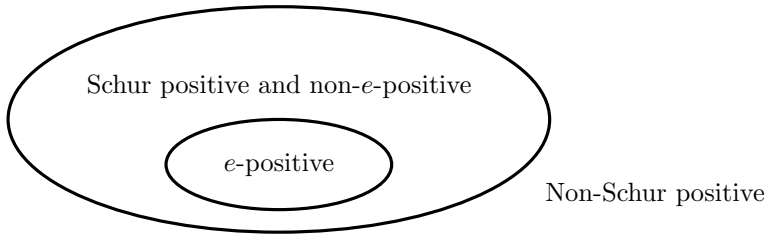
\begin{figure}[htbp]
\begin{tikzpicture}
\draw[OuterBoundary]
(0,0) ellipse [x radius=1.5cm, y radius=.6cm] node{$e$-positive};
\begin{scope}[yshift=.6cm]
\draw[OuterBoundary](0,0) ellipse [x radius=3.6cm, y radius=1.5cm]
node[above=4pt]{Schur positive and non-$e$-positive};
\draw(5,-1) node{Non-Schur positive};
\end{scope}
\end{tikzpicture}
\caption{The logical relation between $e$-positive symmetric functions 
and Schur positive symmetric functions.}\label{fig:e:s}
\end{figure}
since the coefficient of $s_\lambda$ in the Schur expansion of $e_\mu$ 
is the Kostka number $K_{\lambda',\,\mu}\ge0$,
where $\lambda$ and $\mu$ are partitions of $n$,
and $\lambda'$ is the conjugate of~$\lambda$;
see Mendes and Remmel~\cite[Theorem 2.22]{MR15B} and \cref{fig:e:s}.
The study of Schur positivity of chromatic symmetric functions 
is an active area due to its connections to the representation theory 
of the symmetric group and that of the general linear group. 
For instance, see Gasharov~\cite{Gas96P} and recent work
of Pawlowski~\cite{Paw18X}.

Many graph classes have been shown $e$-positive, including
complete graphs, paths, cycles, co-triangle-free graphs,
generalized bull graphs, 
(claw, $P_4$)-free graphs, 
(claw, paw)-free graphs, (claw, co-paw)-free graphs, 
(claw, triangle)-free graphs, (claw, co-$P_3$)-free graphs, $K$-chains,
lollipop graphs, triangular ladders;
see \cite{Sta95, CH18, FHM19, Tsu18, HHT19, Dah19X, DvW18}.
Schur positive graphs include
the incomparability graphs of (3+1)-free posets,
the incomparability graph of the natural unit interval order,
and the 2-edge-colorable hyperforests;
see \cite{Gas99, SW16, Paw18X}.
Non-$e$-positive graphs include the saltire graphs and
triangular tower graphs;
see \cite{DFvW17X, DSvW19X}.
We have not noticed any work concentrated on non-Schur positive graphs yet.

To show the non-$e$-positivity of a connected $n$-vertex graph,
Wolfgang~\cite[Proposition 1.3.3]{Wol97D} provided the following powerful tool.
\begin{proposition}[Wolfgang]\label{prop:ne:Wolfgang}
Every connected $e$-positive graph has a connected partition of any type.
\end{proposition}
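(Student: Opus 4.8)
The plan is to translate the purely combinatorial assertion ``$G$ has a connected partition of type $\mu$'' into the positivity of a single coefficient in the power-sum expansion of $\omega X_G$, and then to use $e$-positivity to force \emph{all} of these coefficients to be strictly positive simultaneously. Throughout, for a connected $e$-positive graph $G$ on $n$ vertices, a connected partition of type $\mu\vdash n$ is a set partition $\pi$ of $V(G)$ whose blocks induce connected subgraphs and whose block sizes are the parts of $\mu$; write $\lambda(\pi)$ for this type.

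First I would start from Stanley's power-sum expansion $X_G=\sum_{S\subseteq E(G)}(-1)^{|S|}p_{\lambda(S)}$, where $\lambda(S)$ records the sizes of the connected components of the spanning subgraph $(V(G),S)$. Grouping the subsets $S$ according to the set partition $\pi$ of $V(G)$ that they induce, the contribution factors over the blocks, and each block $B$ of $\pi$ contributes the local signed count $\sigma(B)=\sum_{S_B}(-1)^{|S_B|}$ taken over spanning connected subgraphs $S_B$ of $G[B]$; this factor vanishes exactly when $G[B]$ is disconnected. Setting $b(B)=(-1)^{|B|-1}\sigma(B)$ and applying $\omega$ (which sends $p_k\mapsto(-1)^{k-1}p_k$) cancels all the signs and yields
\[
\omega X_G=\sum_{\pi\text{ connected}}\Big(\prod_{B\in\pi}b(B)\Big)\,p_{\lambda(\pi)},
\]
so that the coefficient of $p_\mu$ in $\omega X_G$ equals $\sum_{\lambda(\pi)=\mu}\prod_{B}b(B)$. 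Consequently this coefficient is nonzero precisely when some connected partition of type $\mu$ exists, and the problem is reduced to proving that every one of these coefficients is nonzero.

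Next I would bring in positivity. The factor $b(B)$ equals the number of acyclic orientations of $G[B]$ with a prescribed unique sink (equivalently, the unsigned M\"obius number of the bond lattice of $G[B]$), which by Rota's sign theorem for geometric lattices is strictly positive whenever $G[B]$ is connected. In particular the coefficient of $p_{(n)}$ in $\omega X_G$ is $b(V(G))>0$ because $G$ is connected; comparing this with the expansion $\omega X_G=\sum_\lambda c_\lambda h_\lambda$, where $c_\lambda=[e_\lambda]X_G$, and observing that $h_n$ is the only $h_\lambda$ whose $p$-expansion supports $p_{(n)}$, gives $c_{(n)}=n\,b(V(G))>0$.

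Finally comes the decisive step. If $G$ is $e$-positive then every $c_\lambda\ge0$, and since each $h_\lambda$ is a product of the $p$-positive functions $h_{\lambda_i}=\sum_{\nu\vdash\lambda_i}z_\nu^{-1}p_\nu$ (and $p_\alpha p_\beta=p_{\alpha\cup\beta}$), every $h_\lambda$ is itself $p$-positive. Now I would exploit the fully supported expansion $h_n=\sum_{\mu\vdash n}z_\mu^{-1}p_\mu$ with all $z_\mu^{-1}>0$: the single summand $c_{(n)}h_n$ already contributes $c_{(n)}z_\mu^{-1}>0$ to the coefficient of \emph{every} $p_\mu$, while all the remaining summands contribute nonnegatively. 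Hence $[p_\mu]\,\omega X_G>0$ for all $\mu\vdash n$, and by the reduction above $G$ has a connected partition of every type. The main obstacle I anticipate is securing the positivity input $b(B)>0$ and the ensuing fact $c_{(n)}>0$ for connected graphs; once these are available, the ``spreading'' of positivity from the lone coefficient $c_{(n)}$ to all $p_\mu$ through the full support of $h_n$ is the surprisingly cheap step that finishes the argument.
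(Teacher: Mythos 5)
The paper itself offers no proof of this proposition --- it is quoted from Wolfgang's thesis \cite{Wol97D} as a known tool --- so there is nothing internal to compare against; judged on its own, your argument is correct and complete, and it is essentially the classical proof of this fact. The two pillars both check out. First, grouping the terms of the power-sum expansion in \cref{prop:csf} by the component partition $\pi$ of $(V(G),S)$ does factor block by block (no edge of $S$ can join two blocks), and the local factor $\sigma(B)=\sum_{S_B}(-1)^{|S_B|}$ over connected spanning subgraphs of $G[B]$ is exactly $[q^1]\chi_{G[B]}(q)=\mu(\hat0,\hat1)$ of the bond lattice, so $b(B)=(-1)^{|B|-1}\sigma(B)>0$ for connected $G[B]$ by Rota's sign theorem, or by Greene--Zaslavsky, or even just by the alternating-sign property of chromatic polynomial coefficients; hence $[p_\mu]\,\omega X_G=\sum_{\lambda(\pi)=\mu}\prod_B b(B)$ is positive precisely when a connected partition of type $\mu$ exists. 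Second, the ``spreading'' step is sound: $h_\lambda$ with $\ell(\lambda)\ge 2$ only supports $p_\mu$ with $\ell(\mu)\ge 2$, so $c_{(n)}=n\,b(V(G))>0$ by connectedness, and then $c_{(n)}h_n=c_{(n)}\sum_{\mu\vdash n}z_\mu^{-1}p_\mu$ alone forces every $[p_\mu]\,\omega X_G$ to be positive once $e$-positivity guarantees that the remaining $c_\lambda h_\lambda$ contribute nonnegatively. The only place a referee might ask for a citation rather than an assertion is the identification of $b(B)$ with the sink-counting/M\"obius quantity, but as you note this is standard and, in any case, only strict positivity of $b(B)$ is needed, which follows elementarily from a breadth-first orientation toward a prescribed sink.
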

For the non-Schur positivity, 
Stanley~\cite[Proposition 1.5]{Sta98} pointed the following tool.
\begin{proposition}[Stanley]\label{prop:nS:dominance}
Every Schur positive graph having a stable partition of type $\lambda$
has a stable partition of type $\mu$ for all partitions $\mu$ dominated by $\lambda$.
\end{proposition}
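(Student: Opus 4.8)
The plan is to translate the purely combinatorial hypothesis into a statement about the monomial expansion of $X_G$, and then to exploit the well-known triangularity of the Kostka numbers with respect to the dominance order. Recall Stanley's expansion of the chromatic symmetric function in the augmented monomial basis, $X_G=\sum_\lambda a_\lambda\,\tilde m_\lambda$, where $a_\lambda$ counts the stable partitions of $G$ of type $\lambda$ and $\tilde m_\lambda=\bigl(\prod_i m_i(\lambda)!\bigr)m_\lambda$ is a positive scalar multiple of the monomial symmetric function $m_\lambda$; see \cite{Sta95}. Consequently the coefficient $[m_\lambda]X_G$ equals a positive multiple of $a_\lambda$, so $G$ has a stable partition of type $\lambda$ if and only if $[m_\lambda]X_G>0$. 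Thus the entire proposition reduces to showing that, when $X_G$ is Schur positive, positivity of $[m_\lambda]X_G$ propagates downward along the dominance order.

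Next I would pass to the Schur expansion. Writing $X_G=\sum_\nu c_\nu s_\nu$ with all $c_\nu\ge0$ by hypothesis, and substituting $s_\nu=\sum_\mu K_{\nu\mu}m_\mu$, one obtains $[m_\mu]X_G=\sum_\nu c_\nu K_{\nu\mu}$, a sum of nonnegative terms. The key structural input is the standard fact that $K_{\nu\mu}>0$ holds precisely when $\nu\ge\mu$ in the dominance order, and $K_{\nu\mu}=0$ otherwise. This is the step that makes the dominance order appear, and it is the heart of the argument: it is exactly the triangularity already invoked in \cref{fig:e:s} via the Kostka numbers, now read as an equivalence rather than a mere nonnegativity.

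To finish, suppose $G$ has a stable partition of type $\lambda$, so that $[m_\lambda]X_G=\sum_\nu c_\nu K_{\nu\lambda}>0$. Then some $\nu$ satisfies $c_\nu>0$ and $K_{\nu\lambda}>0$, the latter forcing $\nu\ge\lambda$. Now fix any partition $\mu$ dominated by $\lambda$, i.e.\ $\mu\le\lambda$. Transitivity of the dominance order gives $\nu\ge\lambda\ge\mu$, hence $K_{\nu\mu}>0$, and therefore $[m_\mu]X_G\ge c_\nu K_{\nu\mu}>0$. By the equivalence recorded in the first paragraph, $G$ has a stable partition of type $\mu$, as required.

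The main obstacle here is conceptual rather than computational: one must recognize that ``admitting a stable partition of a given type'' is equivalent to the positivity of a single monomial coefficient, and then marry this to the positivity pattern of the Kostka numbers. Once the combinatorial condition is phrased coefficient-theoretically, the dominance monotonicity of the Kostka numbers does all of the remaining work, so no genuinely hard estimate is needed; the only point demanding care is that one uses the full equivalence $K_{\nu\mu}>0\iff\nu\ge\mu$ in both directions, rather than mere nonnegativity.
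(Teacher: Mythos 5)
Your proof is correct. The paper itself offers no proof of this proposition---it is quoted from Stanley~\cite[Proposition~1.5]{Sta98}---but your argument is exactly the standard one: reduce ``has a stable partition of type $\lambda$'' to $[m_\lambda]X_G>0$ via Stanley's augmented-monomial expansion (\cref{prop:csf}), expand $s_\nu=\sum_\mu K_{\nu\mu}m_\mu$, and use the equivalence $K_{\nu\mu}>0\iff\nu\trianglerighteq\mu$ in both directions together with transitivity of dominance. No gaps; the only point worth flagging is that your aside relating this to \cref{fig:e:s} is a red herring (the Kostka numbers there govern the $e$-to-$s$ transition, $[s_\lambda]e_\mu=K_{\lambda',\mu}$, not the $s$-to-$m$ transition used here), but nothing in the argument depends on it.
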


The smallest non-Schur positive connected graph is the claw, 
whose chromatic symmetric function is 
\[
X_{\text{claw}}=s_{31}-s_{2^2}+5s_{21^2}+8s_{1^4}.
\] 
All the other $4$-vertex connected graphs are Schur positive.

\Cref{prop:nS:dominance} is powerful in determining Schur positive 
wheel graphs, Schur positive windmill graphs, and Schur positive complete bipartite graphs; see \cref{eg:nS:wheel,eg:nS:windmill,eg:nS:Knm}, respectively.
However, it does not work
when a graph has a stable partition of some type $\lambda$
and a stable partition of any type $\mu$ that is dominated by $\lambda$. 
For instance, the fan graph $F_{4,6}$ is shown to be non-Schur positive but the type of every stable partition of $F_{4,6}$ is dominated with the type $\lambda=(4,3,3)$; see~\cref{thm:nS:fan}.

In this paper, we provide a combinatorial way to compute the Schur coefficients
of a chromatic symmetric function explicitly; see~\cref{thm:s-in-X}.
Our formula involves special rim hook tableaux 
and stable partitions of the graph 
whose type is the length list of the rim hooks.
We find it often not hard to compute 
some selected specific coefficients using the formula, 
especially when the graph is well structured, 
so that to determine the non-Schur positivity.

Using \cref{thm:s-in-X},
we determine Schur positive fan graphs,
Schur positive complete tripartite graphs,
and show the non-Schur positivity of a special kind of squid graphs.
We pose \cref{conj:sq:2n-1:1n} 
for further study.
The non-$e$-positivity of some of the above graphs has been confirmed 
by Dahlberg, She and van Willigenburg~\cite{DSvW19X}.
All results in this paper are checked by using Russell's program \cite{Mar20W}.

The chromatic symmetric functions $X_G$ 
was generalized to $Y_G$ in noncommuting variables by 
Gebhard and Sagan~\cite{GS01}. 
Dahlberg and van Willigenburg~\cite[Theorem~4.14]{DvW20}
showed that the $e$-positivity and Schur positivity results of 
the functions~$Y_G$ are simple and beautiful. In particular,
for any graph $G$ with distinct vertex labels in the set $[n]$,
the function $Y_G$ is $e$-positive if and only if 
$G$ is a disjoint union of complete graphs.

\section{Preliminaries}
A \emph{composition} of $n\ge 1$ is a list of integers that sum to $n$.
An \emph{integer partition} of~$n$ is a composition 
$\lambda=(\lambda_1,\lambda_2,\dots,\lambda_{\ell})$ of $n$
in non-increasing order, denoted $\lambda\vdash n$.
It can be written as
$\lambda=1^{m_1}2^{m_2}\cdots$
alternatively,
where $m_i$ is the multiplicity of~$i$ in~$\lambda$.
Denote $\lambda^!=\prod_{i\ge 1}m_i!$.
A \emph{stable partition} of a graph $G$ is a set partition
$\pi=\{V_1, V_2, \dots, V_m\}$
of the vertex set $V(G)$ such that every set $V_i$ is stable.
The cardinalities $\abs{V_1}$, $\abs{V_2}$, $\dots$, $\abs{V_m}$
form an integer partition, called the \emph{type} of $\pi$.
Besides the elementary symmetric functions $e_\lambda$
and Schur symmetric functions~$s_\lambda$, 
common bases for the ring $\Lambda_n$ include
the monomial symmetric functions~$m_\lambda$
and the power symmetric functions $p_\lambda$.
In terms of stable partitions, monomial and power symmetric functions,
Stanley~\cite{Sta95} obtained~\cref{prop:csf} as a basic way
to compute the chromatic symmetric function of a graph.

\begin{proposition}[Stanley]\label{prop:csf}
The chromatic symmetric function of a graph $G$ is
\[
X_G
=\sum_{\pi}\mu(\pi)^!\cdot m_{\mu(\pi)}
=\sum_{S\subseteq E(G)}(-1)^{\abs{S}}p_{\lambda(S)},
\]
where $\pi$ runs over stable partitions of $G$, 
and $\mu(\pi)$ is the type of $\pi$; $\lambda(S)$ 
is the integer partition of $\abs{V(G)}$ whose parts 
are the component orders of the graph~$(V(G),\,S)$.
\end{proposition}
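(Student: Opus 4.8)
The plan is to establish the two claimed identities separately, since they arise from genuinely different manipulations of the defining sum $X_G=\sum_\kappa\prod_{v}x_{\kappa(v)}$ over proper colorings. The first (monomial) identity comes from organizing proper colorings by their color-class structure, while the second (power-sum) identity comes from an inclusion--exclusion over the monochromatic edges.

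For the monomial expansion, first I would note that every proper coloring $\kappa$ determines a stable partition $\pi$ of $G$, namely the partition of $V(G)$ into nonempty color classes $\kappa^{-1}(i)$; each class is stable precisely because $\kappa$ is proper. Conversely, the proper colorings inducing a fixed stable partition $\pi=\{V_1,\dots,V_m\}$ are exactly the assignments of pairwise distinct colors to the blocks $V_1,\dots,V_m$, since two blocks sharing a color would merge. Grouping the defining sum accordingly gives
\[
X_G=\sum_\pi\ \sum_{\substack{c_1,\dots,c_m\\ \text{distinct}}}\ \prod_{j=1}^m x_{c_j}^{\abs{V_j}}.
\]
The task then reduces to showing that the inner sum equals $\mu(\pi)^!\,m_{\mu(\pi)}$. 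For this I would fix a rearrangement $\beta$ of $\mu(\pi)$ and count the distinct-color assignments producing the monomial $x^\beta$: because the colors are distinct, each color $i$ with $\beta_i>0$ must be carried by a single block of size $\beta_i$, so the count is a product over part sizes $v$ of the number $m_v!$ of bijections between the $m_v$ colors recording value $v$ and the $m_v$ blocks of size $v$. This product is exactly $\mu(\pi)^!$, independent of $\beta$, and summing $x^\beta$ over all rearrangements $\beta$ of $\mu(\pi)$ recovers $m_{\mu(\pi)}$.

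For the power-sum expansion, I would write properness as the condition that no edge is monochromatic and expand the indicator $\prod_{e\in E(G)}\bigl(1-[\kappa\text{ monochromatic on }e]\bigr)$ over subsets:
\[
X_G=\sum_{\kappa}\sum_{S\subseteq E(G)}(-1)^{\abs{S}}\prod_{e\in S}[\kappa\text{ monochromatic on }e]\prod_{v}x_{\kappa(v)}.
\]
After interchanging the two sums, the inner sum over $\kappa$ ranges over colorings that are constant on each edge of $S$, equivalently constant on each connected component of the graph $(V(G),S)$. Such a coloring is determined by one color per component, so the inner sum factors as $\prod_j\bigl(\sum_c x_c^{\abs{C_j}}\bigr)=\prod_j p_{\abs{C_j}}=p_{\lambda(S)}$, where $C_1,\dots,C_k$ are the components; this yields the stated formula.

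I expect the main obstacle to be the multiplicity bookkeeping in the monomial identity, namely verifying that the number of distinct-color assignments producing any given rearrangement $\beta$ of $\mu(\pi)$ is the constant $\mu(\pi)^!$ rather than depending on $\beta$. The power-sum identity, by contrast, is a routine inclusion--exclusion once one observes that constancy along every edge of $S$ is the same as constancy on each component of $(V(G),S)$.
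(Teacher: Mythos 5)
Your proof is correct. The paper offers no proof of this proposition --- it is quoted directly from Stanley \cite{Sta95} --- and your two arguments (grouping proper colorings by the stable partition into color classes and counting the injective color assignments for each rearrangement of $\mu(\pi)$, then inclusion--exclusion over sets of monochromatic edges reducing to one color per component of $(V(G),S)$) are exactly the standard ones in Stanley's original paper, so there is nothing to compare against and no gap to report.
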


Before introducing our main result \cref{thm:s-in-X}, 
we recall the power of Stanley's \cref{prop:nS:dominance} 
in proving the non-Schur positivity of a graph.
A partition $\mu=(\mu_1,\mu_2,\dots)$ is \emph{dominated} by
a partition $\lambda=(\lambda_1,\lambda_2,\dots)$, 
written as $\mu\trianglelefteq\lambda$, if 
$\sum_{i=1}^t\mu_i\le \sum_{i=1}^t\lambda_i$
for all $t$. The dominance ordering on the set of partitions is a partial order
and considered ``natural'' by Macdonald~\cite[Page 7]{Mac95B}.
In using \cref{prop:nS:dominance}, it suffices
to select two integer partitions $\lambda$ and $\mu$ such that 
\begin{itemize}
\item
$\mu$ is dominated by $\lambda$, 
\item
$G$ has a stable partition of type $\lambda$,
\item
$G$ has no stable partitions of type $\mu$.
\end{itemize}

\Cref{eg:nS:wheel,eg:nS:windmill} are applications of \cref{prop:nS:dominance} to show the non-Schur positivity.

\begin{example}\label{eg:nS:wheel}
The \emph{wheel graph} $W_n$ is the graph $C_{n-1}+P_1$
formed by connecting a single vertex to all vertices of the cycle $C_{n-1}$;
see \cref{fig:wheel8} for an illustration of~$W_8$.
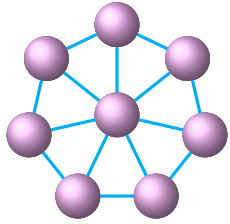
\begin{figure}[htbp]
\begin{tikzpicture}[rotate=90, scale=0.6]
\grWheel[RA=2]{8};
\end{tikzpicture}
\caption{The wheel graph $W_8$.}\label{fig:wheel8}
\end{figure}
The graph $W_4=K_4$ is $e$-positive,
and the graphs $W_5$ and $W_6$ are $e$-positive since
\[
X_{W_5}=70e_5+6e_{41}+2e_{32^2}
\quad\text{and}\quad
X_{W_6}=180e_6+40e_{51}+20e_{42}.
\]
When $n\ge 7$, the graph $W_n$ is not Schur positive.
This can be seen by using \cref{prop:nS:dominance} 
and taking $\mu\trianglelefteq\lambda$ 
in the following way.
If $n$ is odd, then we take 
\[
\lambda=\brk1{(n-1)/2,\,(n-1)/2,\,1}
\quad\text{and}\quad
\mu=\brk1{(n-1)/2,\,(n-3)/2,\,2}.
\]
If $n$ is even, then we take
\[
\lambda=(n/2-1,\,n/2-1,\,1,\,1)
\quad\text{and}\quad
\mu=(n/2-2,\,n/2-2,\,2,\,2).
\]
\end{example}

\begin{example}\label{eg:nS:windmill}
The {\it windmill graph} $W_n^d$ is obtained 
by joining $d$ copies of the complete graph~$K_n$
at a shared common vertex. \Cref{fig:windmill} illustrates the graph~$W_3^4$.
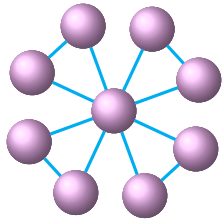
\begin{figure}[htbp]
\begin{tikzpicture}[rotate=65, scale=.6]
\grStar[prefix=u, RA=2]{9};
\EdgeInGraphMod*{u}{8}{1}{1}{2}
\end{tikzpicture}
\caption{The windmill graph $W_3^4$.}\label{fig:windmill}
\end{figure}
It is clear that $\abs1{W_n^d}=dn-d+1$.
Note that 
$W_1^d=K_1$ and
$W_n^1=K_n$ are $e$-positive.
Stanley~\cite[Corollary~3.6]{Sta95} proved that $W_n^2$ 
is $e$-positive by showing that any graph having a bipartition
consisting of two cliques is $e$-positive.
Dahlberg et~al.~\cite[Example~40]{DSvW19X} presented that
the star $W_2^d$ is not Schur positive for $d\ge 3$,
and that \cite[Example~36]{DSvW19X}
$W_n^d$ is not $e$-positive for $n,d\ge 3$.
In fact, for $n, d\ge 3$, the windmill graph $W_n^d$ is not Schur positive,
which can be seen by taking
\[
\lambda=\brk1{d^{n-1}1}
\quad\text{and}\quad
\mu=\brk1{d^{n-2}(d-1)2}
\]
and using \cref{prop:nS:dominance}.
\end{example}

\section{Non-Schur positivity of some graphs}\label{sec:result}

In order to state our main result \cref{thm:s-in-X},
we need more notions on Young tableaux. 
We follow terminologies from \cite{MR15B} and use the French notation;
see Fulton~\cite{Ful97B} for more information on Young tableaux.

The \emph{Young diagram} of an integer partition $\lambda$ is
the collection of left-justified rows of~$\lambda_i$ cells 
in the $i$th row reading from bottom to top.
A \emph{rim hook} is a sequence of connected cells in a Young diagram which
starts from a cell on the northeast boundary
and travels along the northeast edge
such that its removal leaves the Young diagram a smaller integer partition.
For any composition $\mu=(\mu_1,\mu_2,\dots,\mu_l)$ of $n$,
a \emph{rim hook tableau} of shape~$\lambda$ and content~$\mu$
is a filling of the cells of the Young diagram of $\lambda$
with rim hooks of lengths $\mu_l,\,\mu_{l-1},\,\dots$,
labeled with $1,\,2,\,\dots$, such that the removal of the last $i$ rim hooks
leaves the Young diagram of a smaller integer partition for all~$i$.
A \emph{rim hook tabloid} is a rim hook tableau with all rim hook labels removed.
A \emph{special rim hook tabloid} is a rim hook tabloid such that
every rim hook intersects the first column of the tabloid.
Let $\mathcal{T}(\lambda)$ be the set of special rim hook tabloids 
of shape~$\lambda$; see \cref{fig:SRHT} for illustration.
\begin{figure}[htbp]
\centering
\begin{tikzpicture}
\tyng(0cm, 0cm, 5^2,4,3,1)
\coordinate (S1) at (0.5, 0.5);
\coordinate (S2) at (.5, 1.5);
\coordinate (S3) at (.5, 2.5); 
\coordinate (S4) at (.5, 4.5);
\coordinate (1) at (3.5, 0.5); 
\coordinate (2) at (4.5, 0.5);
\coordinate (3) at (1.5, 2.5); 
\coordinate (4) at (3.5, 2.5);
\draw[RimHook]
(S4) -- (.5, 3.5) -- (2.5, 3.5) -- (2.5, 2.5) -- (4) 
(S3) -- (3) 
(S2) -- (4.5, 1.5) -- (2)
(S1) -- (1);
\draw[OuterBoundary]
(0, 0) 
-- (5, 0)
-- (5, 2)
-- (4, 2)
-- (4, 3)
-- (3, 3)
-- (3, 4)
-- (1, 4)
-- (1, 5)
-- (0, 5)
-- (0, -.025);
\foreach \s in {S1, S2, S3, S4}
  \shade[RHiball](\s) circle(.1);
\foreach \e in {1, 2, 3, 4}
  \shade[RHeball](\e) circle(.3) node{\e};

\begin{scope}[xshift=6cm]
\tyng(0cm, 0cm, 5^2,4,3,1)
\coordinate (S1) at (.5, 0.5);
\coordinate (S2) at (.5, 1.5);
\coordinate (S3) at (.5, 2.5); 
\coordinate (S4) at (.5, 4.5);
\coordinate (1)  at (3.5, 0.5); 
\coordinate (2)  at (4.5, 0.5);
\coordinate (3)  at (1.5, 2.5); 
\coordinate (4)  at (3.5, 2.5);
\draw[RimHook]
(S4) -- (.5, 3.5) -- (2.5, 3.5) -- (2.5, 2.5) -- (4) 
(S3) -- (3) 
(S2) -- (4.5, 1.5) -- (2)
(S1) -- (1);
\draw[OuterBoundary]
(0,0)
-- (5, 0)
-- (5, 2)
-- (4, 2)
-- (4, 3)
-- (3, 3)
-- (3, 4)
-- (1, 4)
-- (1, 5)
-- (0, 5)
-- (0, -.025);
\foreach \s in {S1, S2, S3, S4, 1, 2, 3, 4}
  \shade[RHiball](\s) circle(.1);
\end{scope}
\end{tikzpicture}
\caption{A rim hook tableau of shape $\lambda=(5,5,4,3,1)$ 
and content $\mu=(4,6,2,6)$; 
a special rim hook tabloid of shape $\lambda$ and content $\mu$.}
\label{fig:SRHT}
\end{figure}
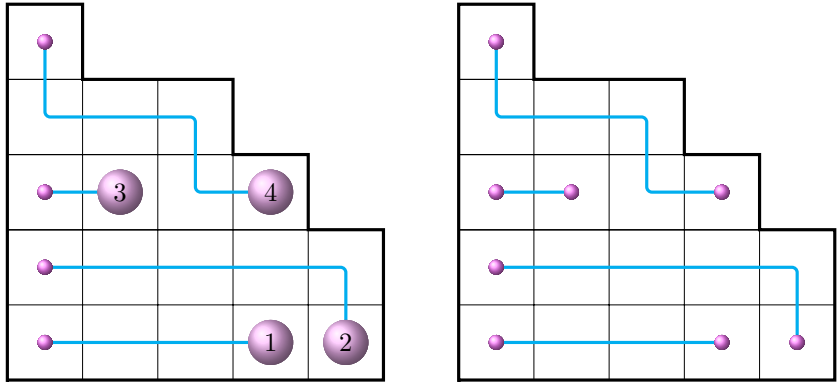
For any $T\in\mathcal{T}(\lambda)$, denote by
$\pi(T)=(\pi_1,\,\pi_2,\,\ldots)$
the length list of its rim hooks from up to bottom,
and by $W(T)$ the set of row labels $i$ of $T$ from up to bottom
such that the rim hook with an end in Row $i$ and Column~$1$
spans an even number of rows.
Since any composition~$\tau$ of $n$ determines 
at most one tabloid $T\in\mathcal{T}(\lambda)$
such that $\pi(T)=\tau$,
one may identify a tabloid in $\mathcal{T}(\lambda)$ by clarifying $\pi(T)$.

Here is our first main result.

\begin{theorem}\label{thm:s-in-X}
For any graph $G$ and any integer partition $\lambda\vdash \abs{V(G)}$,
\[
X_G
=\sum_{T\in\mathcal{T}(\lambda)}(-1)^{\abs{W(T)}}\ \pi(T)^!\ N_G(T)\cdot s_\lambda,
\]
where $\mathcal{T}(\lambda)$ is the set of special rim hook tabloids 
of shape $\lambda$;
$\abs{W(T)}$ is the number of rim hooks that span an even number of rows;
$\pi(T)$ is the integer partition of~$\abs{V(G)}$ 
whose parts are the rim hook lengths of $T$;
and $N_G(T)$ is the number of stable partitions of $G$ whose type is $\pi(T)$.
\end{theorem}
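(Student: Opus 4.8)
The plan is to read off the monomial expansion of $X_G$ from \cref{prop:csf}, pass to the Schur basis through the inverse Kostka matrix, and then invoke the combinatorial description of the inverse Kostka numbers by the special rim hook tabloids introduced above.

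First I would group the stable partitions appearing in the monomial form of \cref{prop:csf} according to their type. Since $\mu(\pi)^!$ depends only on the type $\mu(\pi)$, collecting the stable partitions of each type $\mu$ gives
\[
X_G=\sum_{\mu\vdash\abs{V(G)}}\mu^!\,N_G(\mu)\,m_\mu,
\]
where $N_G(\mu)$ denotes the number of stable partitions of $G$ of type $\mu$; in particular $N_G(T)=N_G(\pi(T))$. Next I would expand each $m_\mu$ in the Schur basis. Writing $K=(K_{\lambda\mu})$ for the Kostka matrix, the relation $s_\lambda=\sum_\mu K_{\lambda\mu}m_\mu$ inverts to $m_\mu=\sum_\lambda K^{-1}_{\mu\lambda}s_\lambda$, so the coefficient of $s_\lambda$ in $X_G$ equals $\sum_\mu\mu^!\,N_G(\mu)\,K^{-1}_{\mu\lambda}$.

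The essential input is the combinatorial interpretation of the inverse Kostka numbers due to E\u{g}ecio\u{g}lu and Remmel (see \cite{MR15B}): for partitions $\lambda,\mu\vdash\abs{V(G)}$ one has $K^{-1}_{\mu\lambda}=\sum_T(-1)^{\abs{W(T)}}$, the sum ranging over special rim hook tabloids $T$ of shape $\lambda$ and content $\mu$, with the sign of each tabloid being the product over its rim hooks of $(-1)^{r-1}$, where $r$ is the number of rows a given hook occupies. Substituting this and interchanging the order of summation collapses the sum over $\mu$ together with the inner sum over tabloids having $\pi(T)=\mu$ into a single sum over all $T\in\mathcal{T}(\lambda)$; setting $\mu=\pi(T)$ yields
\[
\sum_{T\in\mathcal{T}(\lambda)}(-1)^{\abs{W(T)}}\,\pi(T)^!\,N_G(T),
\]
which is precisely the claimed coefficient of $s_\lambda$.

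The work is almost entirely in matching conventions rather than in any genuine difficulty. I must check that the statistic $W(T)$ records the E\u{g}ecio\u{g}lu--Remmel sign, and that the shape/content labelling of the tabloids is paired with the correct entry $K^{-1}_{\mu\lambda}$ rather than its transpose. For the sign, a rim hook spanning $r$ rows contributes $(-1)^{r-1}$, and $r-1$ is odd exactly when $r$ is even; since in a special rim hook tabloid every rim hook meets the first column in a single cell, the hooks spanning an even number of rows correspond bijectively to the row labels making up $W(T)$, so that $(-1)^{\abs{W(T)}}$ is the product of the hook signs, as required. For the index placement, the unitriangularity of $K$ with respect to the dominance order (and hence of $K^{-1}$) pins down which partition serves as the shape, which one can confirm on a small instance such as $m_2=s_2-s_{1^2}$. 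Assembling these verifications completes the argument.
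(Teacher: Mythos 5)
Your proposal is correct and follows essentially the same route as the paper: both start from the monomial expansion of $X_G$ in \cref{prop:csf}, invoke the E\u{g}ecio\u{g}lu--Remmel special rim hook tabloid formula for the inverse Kostka numbers $K^{-1}_{\mu,\lambda}$ (cited in the paper as \cite[Exercise 2.15]{MR15B}), identify the product of hook signs $\prod_H(-1)^{r(H)-1}$ with $(-1)^{\abs{W(T)}}$, and interchange summation to collapse everything into a single sum over $\mathcal{T}(\lambda)$. Your extra care in checking the sign convention and the shape/content index placement is sound but does not change the substance of the argument.
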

\begin{proof}
Recall from~\cite[Exercise 2.15]{MR15B} that 
the coefficient of $s_\lambda$ in the monomial symmetric function~$m_\mu$ equals
the inverse Kostka number
\[
K_{\mu,\lambda}^{-1}
=\sum_{T\in\mathcal{T}(\lambda,\mu)}\prod_{H}(-1)^{r(H)-1},
\]
where $\mathcal{T}(\lambda,\mu)$ is the set 
of special rim hook tabloids of shape $\lambda$ and content $\mu$,
$H$ runs over rim hooks of $T$, and $r(H)$ is the number of rows that $H$ spans.
This result simplifies to
\[
m_\mu
=\sum_{T\in\mathcal{T}(\lambda,\mu)}(-1)^{\abs{W(T)}}s_\lambda,
\]
in virtue of the definition of $W(T)$. 
Now, we use the notation $[s_\lambda]f$ 
to denote the coefficient of $s_\lambda$ in the Schur expansion of a symmetric function $f$. 
By \cref{prop:csf}, we can deduce that
\begin{align*}
[s_\lambda]X_G
&=[s_\lambda]\sum_{\pi}\mu(\pi)^!\ m_{\mu(\pi)}\\
&=\sum_{\pi}\mu(\pi)^!\ \sum_{T\in\mathcal{T}
\brk1{\lambda,\,\mu(\pi)}}(-1)^{\abs{W(T)}}\\
&=\sum_{T\in\mathcal{T}(\lambda)}(-1)^{\abs{W(T)}}\cdot \pi(T)^!\cdot N_G(T),
\end{align*}
where $\pi$ runs over stable partitions of $G$,
and $\mu(\pi)$ is the type of $\pi$.
\end{proof}

\Cref{thm:s-in-X} not only helps determine the non-Schur positivity 
of a graph $G$, but also gives the precise value 
of the coefficient of $s_\lambda$ in $X_G$.
In the remaining of \cref{sec:result},
we show the non-Schur positivity for graphs
in some popular graph families.
The general idea is to discover a particular shape $\lambda$
and show that the sum in \cref{thm:s-in-X} for this $\lambda$ is negative.
For the purpose of showing the negativity of a term,
we often ignore the value of the factor $\pi^!$
and use the fact of the positivity of $\pi^!$ only.
In fact, the product $\pi(T)^!N_G(T)$ is the number of stable partitions $S$
of~$G$ of type $\pi(T)$ such that the stable sets in $S$ with the same number $x$
of vertices are ordered for any $x$.

When the graph $G$ under consideration is clear from context,
we use the notation~$c_\lambda'$ to denote the coefficient $[s_\lambda]X_G$,
for the notation~$c_\lambda$ 
is widely used to denote the coefficient of $e_\lambda$ 
in the $e$-expansion of $X_G$.
In applications of \cref{thm:s-in-X},
we often shorten the notation $\pi(T)$, $W(T)$ and $N_G(T)$,
when no confusion arises, as~$\pi$, $W$ and $N$, respectively. 
\Cref{lem:tabloid} will be useful in the sequel.

\begin{lemma}\label{lem:tabloid}
Let $G$ be a graph and let $\lambda$ be a partition of $\abs{V(G)}$.
For any tabloid $T\in\mathcal{T}(\lambda)$ with $N_G(T)>0$, we have the following.
\begin{itemize}
\item
Every part $\pi_i$ in $\pi(T)$ is less than or equal to 
the independent number $\alpha(G)$.
\item
The length of $\pi(T)$ is less than or equal to the length of $\lambda$.
\item
If every stable partition of $G$ contains a singleton stable set,
then some part~$\pi_i$ in $\pi(T)$ equals $1$.
\end{itemize}
\end{lemma}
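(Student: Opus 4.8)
The plan is to extract from the hypothesis $N_G(T)>0$ a single combinatorial object that simultaneously controls the first and third bullets, and to treat the second bullet by a separate counting argument on the tabloid side. By definition $N_G(T)$ counts the stable partitions of $G$ whose type is $\pi(T)$, so $N_G(T)>0$ means there is at least one stable partition $\pi=\{V_1,\dots,V_m\}$ of $G$ of type $\pi(T)$. Fixing such a $\pi$, the parts of $\pi(T)$ are, up to reordering, exactly the cardinalities $\abs{V_1},\dots,\abs{V_m}$. Both the first and third bullets then follow by reading off this dictionary between parts of $\pi(T)$ and stable sets of $\pi$.

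For the first bullet, each $V_j$ is a stable set of $G$, so $\abs{V_j}\le\alpha(G)$ by the definition of the independence number; since every part $\pi_i$ of $\pi(T)$ equals some $\abs{V_j}$, we conclude $\pi_i\le\alpha(G)$. For the third bullet, the hypothesis states that every stable partition of $G$ contains a singleton stable set, so in particular our fixed $\pi$ has some $V_j$ with $\abs{V_j}=1$, whence the corresponding part of $\pi(T)$ equals $1$.

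The second bullet is the only place where the tableau structure behind \cref{thm:s-in-X}, rather than the graph, enters, and I would argue it by counting first-column cells. The length of $\pi(T)$ is the number of rim hooks of $T$, whereas the length of $\lambda$ equals the number of cells in the first column of the Young diagram of $\lambda$ (one cell per nonempty row). Because $T$ is a \emph{special} rim hook tabloid, every rim hook intersects the first column and therefore contains at least one first-column cell; and since the rim hooks partition the cells of $\lambda$, distinct rim hooks occupy disjoint, nonempty sets of first-column cells. Assigning to each rim hook one of its first-column cells thus injects the rim hooks into the first column, so the number of rim hooks is at most the number of first-column cells, which is the length of $\lambda$.

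The argument is elementary throughout, and I expect no serious obstacle. The one genuine point is recognizing that the ``special'' condition is precisely what guarantees an injection from rim hooks into first-column cells; the only subtlety worth flagging is that a single rim hook may meet the first column in more than one cell, so the map need not be a bijection, which is exactly why the second bullet is an inequality rather than an equality.
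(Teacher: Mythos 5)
Your proposal is correct and follows essentially the same route as the paper: extract a stable partition of type $\pi(T)$ from $N_G(T)>0$ to bound each part by $\alpha(G)$ and to locate a singleton part, and use the fact that every rim hook of a special rim hook tabloid meets the first column to bound the number of rim hooks by the length of $\lambda$. Your write-up is merely more explicit than the paper's three-line argument, in particular in spelling out the injection from rim hooks to first-column cells.
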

\begin{proof}
Let $T\in\mathcal{T}(\lambda)$ with $N_G(T)>0$.
Since every stable set of $G$ contains at most $\alpha(G)$ vertices,
no rim hook in $T$ is longer than $\alpha(G)$.
Since every rim hook intersects the first column of
the Young diagram of $\lambda$, we find that $\pi$ is not longer than $\lambda$.
The last statement holds true since the type of a stable partition of $G$ is the content of $T$.
\end{proof}

A partition $\pi$ of a graph $G$ is 
a set partition $\pi=\{V_1,V_2,\ldots,V_k\}$ of $V(G)$.
We call the integer partition obtained by rearranging the numbers in the sequence 
$\brk1{\abs{V_1},\,\abs{V_2},\,\ldots,\,\abs{V_k}}$
the \emph{type} of $\pi$.
The partition $\pi$ is a \emph{bipartition} when $k=2$,
and a \emph{tripartition} when $k=3$.
It is \emph{balanced} if 
\[
\max\brk[c]1{\abs{V_j}\colon 1\le j\le k}
\le \min\brk[c]1{\abs{V_j}\colon 1\le j\le k}+1.
\]

\subsection{Non-Schur positivity of connected bipartite graphs}

Dahlberg et al.~\cite[Theorem~39]{DSvW19X} proved that 
any bipartite $n$-vertex graph with a vertex of degree greater than $\ceil{n/2}$
is not Schur positive by using \cref{prop:nS:dominance}. 
For connected bipartite graphs, 
\cref{prop:nS:dominance} implies the following stronger result.

\begin{theorem}\label{thm:nS:bipartite}
Any Schur positive connected bipartite graph
has a balanced stable bipartition, namely,
the cardinalities of the two parts differ by $1$ or $0$.
\end{theorem}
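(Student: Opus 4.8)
The plan is to establish the contrapositive through Stanley's dominance criterion, \cref{prop:nS:dominance}. The starting observation is that a connected bipartite graph admits a unique proper $2$-coloring up to swapping the two colors, so there is exactly one partition of $V(G)$ into two stable sets; call it $\{A,B\}$ with $a=\abs{A}\ge b=\abs{B}$ and $a+b=\abs{V(G)}$. This bipartition has type $\lambda=(a,b)$. I would assume $G$ is \emph{not} balanced, i.e.\ $a-b\ge2$, and aim to show $G$ is not Schur positive.

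The key construction is to produce one partition type $\mu\trianglelefteq\lambda$ of which $G$ has no stable partition. I would take $\mu=(a-1,\,b+1)$, the result of nudging the bipartition one step toward balance. Two quick checks are needed: that $\mu$ is a legitimate integer partition, which holds because $a-1\ge b+1$ is exactly the assumption $a-b\ge2$; and that $\mu\trianglelefteq\lambda$, which is immediate since the first partial sums satisfy $a-1\le a$ and the two parts sum to the same total.

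The heart of the argument is to verify that $G$ has no stable partition of type $\mu$. Since $\mu$ has length $2$, any stable partition of type $\mu$ is a partition of $V(G)$ into exactly two stable sets, hence a proper $2$-coloring. By the uniqueness of the bipartition of a connected bipartite graph, such a partition must coincide with $\{A,B\}$, whose type is $\lambda\ne\mu$; this contradiction rules out any stable partition of type $\mu$. As $G$ carries the stable partition $\{A,B\}$ of type $\lambda$ yet has none of the dominated type $\mu$, the contrapositive of \cref{prop:nS:dominance} forces $G$ to be non-Schur positive.

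The only delicate point I anticipate is this last nonexistence claim: it is precisely connectivity, by forcing the $2$-coloring to be unique, that eliminates the more balanced competitor $\mu$, so the connectedness hypothesis is essential and cannot be weakened. The remaining verifications---the admissibility of $\mu$ and the dominance relation---are routine computations on the two-part partitions $(a,b)$ and $(a-1,b+1)$.
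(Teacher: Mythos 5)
Your proposal is correct and follows essentially the same route as the paper: both invoke the uniqueness of the bipartition of a connected bipartite graph and apply \cref{prop:nS:dominance} with $\lambda=(a,b)$ and the dominated partition $\mu=(a-1,b+1)$, concluding that a stable partition of type $\mu$ would contradict that uniqueness. The only difference is presentational (you phrase it as a contrapositive, the paper as a direct contradiction), and your extra checks that $\mu$ is a genuine partition and that $\mu\trianglelefteq\lambda$ are welcome but routine.
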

\begin{proof}
Let $G=(U,V)$ be a connected bipartite graph.
Since $G$ is connected, the bipartition $(U,V)$ is unique.
Assume that $\abs{U}-\abs{V}\ge 2$. 
By \cref{prop:nS:dominance},
$G$ has a stable bipartition $(X,Y)$ such that $\abs{X}=\abs{U}-1$
and $\abs{Y}=\abs{V}+1$.
It is clear that $\{X,Y\}\ne\{U,V\}$, contradicting to the uniqueness of $(U,V)$.
\end{proof}

\Cref{thm:nS:bipartite} implies immediately that 
every spider with at least 3 legs of odd length is not Schur positive,
while the non-$e$-positivity appeared in 
Dahlberg et~al.~\cite[Corollary~16]{DSvW19X}.
In fact, spider graphs are bipartite.
The cardinality difference between the parts of a spider graph 
is one less than the number of legs of odd length.
As a consequence, every spider with at least 3 legs of odd length
is unbalanced and not Schur positive.

The converse of \cref{thm:nS:bipartite} is not true.
This can be seen from \cref{eg:nS:Knm}.

\begin{example}\label{eg:nS:Knm}
The complete bipartite graphs 
$K_{1,1}$, $K_{1,2}$, $K_{2,2}$, and $K_{2,3}$
are $e$-positive,
and all the other complete bipartite graphs are not Schur positive.
In particular, any star of at least $4$ vertices is not Schur positive.
\end{example}
\begin{proof}
Let $K_{m,n}$ be a complete bipartite graph. 
By \cref{thm:nS:bipartite}, we can suppose that $n\in\{m,\,m+1\}$.
Recall that Stanley~\cite[Propositions 5.3 and 5.4]{Sta95} 
showed that all paths and cycles are $e$-positive.
Thus the paths $K_{1,1}$ and $K_{1,2}$, 
and the cycle $K_{2,2}$ are $e$-positive.
The graph $K_{2,3}$ is $e$-positive since
\[
X_{K_{2,3}}=e_{2^21}+9e_{41}+e_{32}+35e_5.
\]
Let $m\ge 3$ and $\lambda=(n,m)$.
When $n=m$, one may obtain the non-Schur positivity by using \cref{prop:nS:dominance} and considering $\mu=(m-1,\,m-1,\,2)$.
When $n=m+1\ge 5$,  one may obtain the non-Schur positivity by using \cref{prop:nS:dominance} and considering
$\mu=(m-1,\,m-1,\,3)$.
The graph $K_{3,4}$ is not Schur positive since
\begin{multline*}
X_{K_{3,4}}
=s_{43}+2s_{421}+s_{41^3}+4s_{3^21}+
20s_{321^2}+32s_{31^4}\\+70s_{2^21^3}+292s_{21^5}+1066s_{1^6}
-4s_{32^2}-3s_{2^31}.
\end{multline*}
This completes the proof.
\end{proof}

\subsection{Non-Schur positive tripartite graphs}
Using \cref{thm:s-in-X}, we are able to 
determine all Schur-positive complete tripartite graphs.

\begin{lemma}\label{lem:ns-pos:tripartiteG}
Let $G$ be a Schur positive tripartite graph.
If all stable tripartitions of $G$ have the same type, then
any stable tripartition of $G$ is balanced. 
\end{lemma}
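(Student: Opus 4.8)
The plan is to argue by contradiction through Stanley's dominance criterion, \cref{prop:nS:dominance}. Assuming $G$ admits at least one stable tripartition (otherwise the statement is vacuously true), the hypothesis supplies a common type $\lambda=(\lambda_1,\lambda_2,\lambda_3)$ with $\lambda_1\ge\lambda_2\ge\lambda_3\ge 1$ and $\lambda_1+\lambda_2+\lambda_3=\abs{V(G)}$. It suffices to prove that $\lambda$ itself is balanced, that is $\lambda_1-\lambda_3\le 1$, since then every stable tripartition, sharing the type $\lambda$, is balanced. So I would suppose for contradiction that $\lambda_1-\lambda_3\ge 2$.

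First I would manufacture a competing tripartition type by transferring one box from the largest part to the smallest: take the multiset $\{\lambda_1-1,\lambda_2,\lambda_3+1\}$ and let $\mu=(\mu_1,\mu_2,\mu_3)$ be its non-increasing rearrangement. The crucial bookkeeping is to confirm that $\mu$ is a partition of $\abs{V(G)}$ into \emph{exactly three positive parts}: positivity of $\lambda_2$ and $\lambda_3+1$ is clear, while $\lambda_1-1\ge\lambda_3+1\ge 2$ follows from $\lambda_1-\lambda_3\ge 2$. I would also record that $\mu\ne\lambda$: an equality of the two multisets would force $\{\lambda_1-1,\lambda_3+1\}=\{\lambda_1,\lambda_3\}$ after cancelling the shared entry $\lambda_2$, hence $\lambda_1=\lambda_3+1$, contradicting $\lambda_1-\lambda_3\ge 2$.

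Next I would verify $\mu\trianglelefteq\lambda$ straight from the definition of dominance. The three partial sums are controlled by $\mu_1=\max\{\lambda_1-1,\lambda_2,\lambda_3+1\}\le\lambda_1$ (using $\lambda_3+1\le\lambda_1$), by $\mu_1+\mu_2=\abs{V(G)}-\min\{\lambda_1-1,\lambda_2,\lambda_3+1\}\le\lambda_1+\lambda_2$ (using $\min\{\lambda_1-1,\lambda_2,\lambda_3+1\}\ge\lambda_3$), and by the trivial equality $\mu_1+\mu_2+\mu_3=\lambda_1+\lambda_2+\lambda_3$. Since $G$ is Schur positive and has a stable partition of type $\lambda$, \cref{prop:nS:dominance} then furnishes a stable partition of $G$ of type $\mu$. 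As $\mu$ has exactly three parts, this stable partition is a tripartition, so by hypothesis its type must equal $\lambda$; but its type is $\mu\ne\lambda$, the desired contradiction.

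I expect the only real subtlety to lie in the construction of $\mu$: the argument works precisely because the box-move leaves all three parts strictly positive, so that the partition supplied by dominance is a \emph{genuine} tripartition, to which the ``same type'' hypothesis applies. Were the smallest part allowed to vanish, or were $\lambda_1-\lambda_3$ only $1$, the transferred configuration would either drop to two parts or coincide with $\lambda$, and the contradiction would evaporate. Tracking these boundary situations, handled uniformly above by the single inequality $\lambda_1-\lambda_3\ge 2$, is where I would be most careful.
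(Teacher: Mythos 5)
Your proof is correct and is exactly the argument the paper intends: the paper's own proof is the single line ``Immediate from \cref{prop:nS:dominance}'', and your box-moving construction of $\mu\trianglelefteq\lambda$ with three positive parts is the natural way to fill in that one-liner. No discrepancies.
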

\begin{proof}
Immediate from \cref{prop:nS:dominance}.
\end{proof}

The \emph{fan} graph $F_{m, n}$ is defined to be 
the join $\overline{K_m}+P_n$ of the complement 
of the complete graph $K_m$ and the path $P_n$.

\begin{theorem}\label{thm:nS:fan}
The fan graph $F_{m,n}$ is Schur positive if and only if 
\[
(m,n)\in\brk[c]1{(1,n)\colon n\in[4]}\cup\brk[c]1{(2,n)\colon n\in[6]}\cup\{(3,4)\}.
\]
\end{theorem}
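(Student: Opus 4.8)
The plan is to prove the theorem by separating it into a positive part (exhibiting $e$-positivity for the finitely many listed pairs $(m,n)$) and a negative part (showing non-Schur positivity for all other pairs using \cref{thm:s-in-X}). For the positive direction, the listed graphs are small, so I would compute $X_{F_{m,n}}$ directly and display its $e$-expansion with nonnegative coefficients; since $e$-positivity implies Schur positivity by the Kostka-number argument in the introduction, this settles the ``if'' direction. The graph $F_{1,n}$ is just the fan $P_1 + P_n$, and these are known to be $e$-positive for small $n$, while $F_{2,n}$ and $F_{3,4}$ can be checked by explicit expansion. The main work is the ``only if'' direction, which requires showing that $F_{m,n}$ is \emph{not} Schur positive for every pair outside the listed set.

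For the negative direction, I would first observe that $F_{m,n}$ is obtained from $P_n$ by adding $m$ mutually nonadjacent vertices, each joined to every vertex of $P_n$. This structure tightly constrains the stable sets: any stable set either lies entirely inside $\overline{K_m}$ (so has size at most $m$) or lies entirely inside $P_n$ (so has size at most $\ceil{n/2}$), because no vertex of $\overline{K_m}$ can share a stable set with any vertex of $P_n$. Consequently the independence number is $\alpha(F_{m,n}) = \max\{m, \ceil{n/2}\}$, and more importantly, every stable partition splits cleanly as a stable partition of $\overline{K_m}$ together with a stable partition of $P_n$. I would exploit this product structure to count $N_G(T)$ for carefully chosen shapes $\lambda$, since the stable partitions of $\overline{K_m}$ (an empty graph) are arbitrary set partitions, while those of $P_n$ are governed by the proper colorings of a path.

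The core of the argument is to select, for each excluded pair $(m,n)$, a single partition $\lambda \vdash (m+n)$ and show that the coefficient $c_\lambda' = [s_\lambda]X_{F_{m,n}}$ given by \cref{thm:s-in-X} is negative. The strategy is to choose $\lambda$ so that the sum over $\mathcal{T}(\lambda)$ is dominated by one or two tabloids $T$, and to arrange for the surviving leading term to carry the sign $(-1)^{\abs{W(T)}} = -1$. Using \cref{lem:tabloid}, the constraints $\pi_i \le \alpha(F_{m,n})$ and $\ell(\pi) \le \ell(\lambda)$ prune away most tabloids, so that only a few compositions $\tau$ can satisfy $N_G(T) > 0$; for those, I compute $\pi(T)^! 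N_G(T)$ using the product structure above. Since each tabloid contributes a term of fixed sign, the delicate point is to guarantee that the negative contributions outweigh the positive ones, which typically means picking $\lambda$ with two rows (or two rows plus a small tail) so that tabloids spanning an even number of rows force a genuine cancellation deficit. I would organize the excluded pairs into a few regimes --- large $m$ relative to $n$, large $n$ relative to $m$, and the boundary cases --- and in each regime produce an explicit $\lambda$ with a verifiably negative coefficient.

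The hard part will be the bookkeeping in the count $N_G(T)$ of stable partitions of prescribed type: for a fixed target type $\pi$, I must count set partitions of $V(F_{m,n})$ whose blocks are stable and whose block sizes form $\pi$, and this requires simultaneously distributing the $m$ apex vertices and the $n$ path vertices into blocks of the right sizes, subject to the path-coloring constraint. The combinatorics of how many blocks of each size receive apex vertices versus path vertices (and the additional factor $\pi^!$ ordering equal-size blocks) is where the computation genuinely bites, and making the resulting inequality $c_\lambda' < 0$ transparent will demand a clean closed form or a sharp lower bound for the path contribution. I expect to need a separate short lemma computing the number of stable partitions of $P_n$ of a given type, after which the negativity of the chosen coefficient should follow by a direct, if careful, estimate.
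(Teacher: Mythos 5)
Your plan for the ``if'' direction breaks down on two of the listed graphs: $F_{2,6}$ and $F_{3,4}$ are Schur positive but \emph{not} $e$-positive (their $e$-expansions contain the terms $-62e_{53}$ and $-2e_{43}$, respectively), so you cannot settle those cases by ``displaying the $e$-expansion with nonnegative coefficients.'' For these two graphs you must exhibit the Schur expansion itself and check its coefficients directly; the Kostka-number argument only covers the genuinely $e$-positive members of the list.

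For the ``only if'' direction, your structural observations about stable sets of the join (every stable set lies wholly in $\overline{K_m}$ or wholly in $P_n$, so $\alpha=\max\{m,\ceil{n/2}\}$ and stable partitions factor) are correct, but the proposal stops exactly where the work begins: you never name the partitions $\lambda$ for the various regimes, and producing one with a provably negative coefficient uniformly in $m$ and $n$ via \cref{thm:s-in-X} is a heavy undertaking. The paper avoids almost all of this by first using dominance. Since $F_{m,n}$ has a \emph{unique} stable tripartition, of type $(m,\floor{n/2},\ceil{n/2})$, \cref{lem:ns-pos:tripartiteG} (i.e., \cref{prop:nS:dominance}) forces $2m-2\le n\le 2m+2$ for any Schur positive fan; a second dominance argument with an explicit $\mu\trianglelefteq\lambda$ in each of the five residual cases, combined with the fact that some parts of $\mu$ must sum to $m$, pins down $m\le 4$ and a short finite list. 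Only the single case $F_{4,6}$ resists dominance and requires the tabloid computation of \cref{thm:s-in-X} (with $\tau=(3^22^2)$), and the handful of remaining small graphs are dispatched by explicit expansion. I recommend restructuring along these lines: lead with the dominance reduction, reserve \cref{thm:s-in-X} for the one case where every stable-partition type is dominated by an achievable one, and replace the $e$-positivity check by a Schur-coefficient check for $F_{2,6}$ and $F_{3,4}$.
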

\begin{proof}
The fan graph $G=F_{m,n}$ has a unique tripartition, whose type is 
\[
\lambda=(m,\,\floor{n/2},\,\ceil{n/2}).
\]
By \cref{lem:ns-pos:tripartiteG}, any two of these 3 integers
differ at most $1$, i.e., $2m-2\le n\le 2m+2$.
Consider the partition
\[
\mu=\begin{cases}
(m+1,\,m-1,\,m-1,\,3),&\text{if $n=2m+2$};\\
(m+1,\,m-1,\,m-1,\,2),&\text{if $n=2m+1$};\\
(m-1,\,m-1,\,m-1,\,3),&\text{if $n=2m$};\\
(m-1,\,m-1,\,m-2,\,3),&\text{if $n=2m-1$};\\
(m-1,\,m-2,\,m-2,\,3),&\text{if $n=2m-2$}.
\end{cases}
\]
It is routine to check that $\mu\trianglelefteq\lambda$.
By \cref{prop:nS:dominance}, $G$ has a stable partition of type $\mu$.
Since any stable partition of $G$ contains a stable partition of $\overline{K_m}$, we infer that some parts in $\mu$ sum to $m$.
It follows that $m\le 4$.
Moreover, if $m=4$, then $n=2m-2=6$; if $m=3$, then $n\le 2m-1=5$.
The remaining of this proof can be done by computer calculation. 
We list the results below for completeness.

The non-Schur positivity of the graph $F_{4,6}$ can be seen by computing the coefficient $c_\tau'$ where $\tau=(3^22^2)$ using \cref{thm:s-in-X}. In fact,
there are 5 tabloids in $\mathcal{T}(\tau)$ with $N>0$:
\begin{enumerate}
\item
$\pi=(3,4,3)$, $W=\{1,2\}$, $\pi^!=2$ and $N=1$.
\item
$\pi=(3,1,4,2)$ and $W=\{1,3\}$.
\item
$\pi=(3,1,3,3)$, $W=\{1\}$ and $\pi^!=6$.
\item
$\pi=(2,4,1,3)$ and $W=\{2\}$.
\item
$\pi=(2,2,4,2)$ and $W=\{3\}$.
\end{enumerate}
By \cref{thm:s-in-X},
only the first two tabloids have positive contributions to the coefficient $c_\tau'$.
The positive contribution of the second tabloid equals the negative contribution of the fourth tabloid, since they have the same partition $\lambda(\pi)=(4,3,2,1)$.
The first tabloid contributes positive $2$ to $c_\tau'$,
and the third tabloid contributes negative $\pi^!N\le -6$.
Hence $c_\tau'\le 2-6$ is negative. 

When $m\le 2$ and $2m-2\le n\le 2m+2$, 
the complete graphs $F_{1,1}=K_2$ and $F_{1,2}=K_3$ are $e$-positive,
the graphs $F_{1,3}$ and $F_{2,2}$ are isomorphic to $K_4-e$ and
\begin{align*}
X_{F_{1,3}}&=X_{F_{2,2}}=X_{K_4-e}=16e_4+2e_{31},\\
X_{F_{1,4}}&=40e_5+12e_{41}+2e_{32},\\
X_{F_{2,3}}&=70e_5+6e_{41}+2e_{32},\\
X_{F_{2,4}}&=276e_6+44e_{51}+4e_{42}+6e_{3^2},\\
X_{F_{2,5}}&=1022e_7+298e_{61}+18e_{52}+12e_{51^2}+22e_{43}+2e_{3^21},\\
X_{F_{2,6}}&=2s_{3^22}+2s_{3^21^2}+14s_{32^21}+44s_{321^3}+212s_{31^5}+68s_{2^4}\\
&\qquad+50s_{2^31^2}+410s_{2^21^4}+2238s_{21^6}+5658s_{1^8},\\
X_{F_{3,4}}
&=2s_{32^2}+4s_{321^2}+8s_{31^4}+4s_{2^31}+70s_{2^21^3}+300s_{21^5}+1902s_{1^7},\\
X_{F_{3,5}}&=2s_{3^22}+2s_{3^21^2}+12s_{32^21}+24s_{321^3}+62s_{31^5}\\
&\qquad-46s_{2^4}+120s_{2^31^2}+428s_{2^21^4}+2088s_{21^6}+10554s_{1^8}.
\end{align*}
This completes the proof.
\end{proof}
From the above proof we see that there are 
only two Schur positive fan graphs $F_{m,n}$
which are not $e$-positive, i.e., $F_{2,6}$ and $F_{3,4}$:
\begin{align*}
X_{F_{2,6}}&=3632e_8+1660e_{71}+160e_{62}+170e_{61^2}\\
&\qquad-62e_{53}+30e_{521}+56e_{4^2}+10e_{431}+2e_{3^22},\\
X_{F_{3,4}}&=1610e_7+226e_{61}+60e_{52}+4e_{51^2}-2e_{43}+2_{421}+2e_{3^21}.
\end{align*}

\begin{theorem}\label{thm:nS:tripartite}
The graphs
$K_{1,1,1}$, $K_{1,1,2}$, $K_{1,2,2}$, $K_{2,2,2}$ and $K_{2,2,3}$
are $e$-positive, and all the other complete tripartite graphs are not Schur positive.
\end{theorem}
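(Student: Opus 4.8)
The plan is to exploit the rigidity of stable partitions in a complete tripartite graph. Write $G=K_{a,b,c}$ with parts $A,B,C$ of sizes $a\le b\le c$. Since vertices in distinct parts are adjacent, every stable set lies in a single part; hence $\alpha(G)=c$, and a stable partition of $G$ is nothing but an independent partitioning of each of $A,B,C$ into blocks. Consequently the types of stable partitions of $G$ are precisely the partitions $\mu\vdash n$ (with $n=a+b+c$) whose parts can be distributed into three groups summing to $a$, $b$ and $c$; in particular $G$ has a unique stable tripartition, of type $\lambda=(c,b,a)$. First I would settle the five $e$-positive graphs by writing down their $e$-expansions (for instance $X_{K_{1,1,1}}=6e_3$), whence Schur positivity follows. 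For every other $K_{a,b,c}$ the task is to exhibit a partition $\mu\trianglelefteq\lambda$ that is not the type of any stable partition and then apply \cref{prop:nS:dominance}.

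I would split the remaining graphs according to balance. When $c-a\ge2$ the unique tripartition is unbalanced, and since all tripartitions of $G$ share this single type, \cref{lem:ns-pos:tripartiteG} shows that $G$ is not Schur positive. There remain the balanced graphs with $c\le a+1$, forming the three families $K_{a,a,a}$, $K_{a,a,a+1}$ and $K_{a,a+1,a+1}$; deleting from each the small $e$-positive members leaves $a\ge3$ in the first two families and $a\ge2$ in the third. For these I propose the uniform unachievable dominated partitions
\begin{gather*}
(a,a-1,a-1,2)\trianglelefteq(a,a,a)\quad(a\ge3),\\
(a+1,a-1,a-1,2)\trianglelefteq(a+1,a,a)\quad(a\ge3),\\
(a,a,a,2)\trianglelefteq(a+1,a+1,a)\quad(a\ge2).
\end{gather*}
In each line the domination is a one-line partial-sum check, so the content lies in the non-achievability. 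The mechanism is uniform: the largest available part is forced (for $K_{a,a,a}$ the only subset of $\{a,a-1,a-1,2\}$ summing to $a$ is $\{a\}$ itself; for $K_{a,a,a+1}$ the part $a+1$ must occupy the $(a+1)$-group alone), after which the leftover parts $a-1,a-1,2$ admit no splitting into two blocks each of sum $a$. The family $K_{a,a+1,a+1}$ is instead killed by a counting obstruction: each of the three parts equal to $a$ can be completed to $a+1$ only by a part of size $1$, which is absent, so at most one of them can be placed and the grouping fails.

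The step I expect to be delicate is making the non-achievability genuinely uniform and airtight across all $a$, in particular checking the boundary values $a=3$ (first two families) and $a=2$ (third) so that each $\mu$ is simultaneously a legitimate partition, strictly dominated by $\lambda$, and truly unrealizable, while confirming that the very same construction does \emph{fail} for the five exceptional graphs, as it must since those are $e$-positive. As an independent check, and because it is the route advertised by \cref{thm:s-in-X}, one may instead compute a single negative Schur coefficient $[s_\lambda]X_G$ by summing the signed terms $(-1)^{\abs{W(T)}}\pi(T)^!N_G(T)$ over the handful of special rim hook tabloids $T$ with $N_G(T)>0$, exactly as done for $F_{4,6}$ in \cref{thm:nS:fan}; this is more computational but removes the need to guess the dominated partition in advance.
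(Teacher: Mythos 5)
Your proof is correct, but it takes a genuinely different route from the paper for the decisive cases. Both arguments begin the same way: the five listed graphs are handled by explicit $e$-expansions, the unbalanced graphs ($c-a\ge 2$) are eliminated via \cref{lem:ns-pos:tripartiteG}, and the key structural fact is that a stable set of $K_{a,b,c}$ lies inside a single part, so the achievable stable-partition types are exactly the partitions of $a+b+c$ whose parts can be grouped into blocks summing to $a$, $b$, $c$. The divergence is in the three balanced families: the paper invokes its main formula (\cref{thm:s-in-X}) with the shape $\lambda=(t,\,s-1,\,r-1,\,2)$, enumerates the few special rim hook tabloids with $N_G(T)>0$ in each of the cases $s=r=t$, $s=r=t-1$, $s=r+1=t$, and shows the signed sum is negative (e.g.\ $c'_\lambda\le 6-6s$, $\le 2-2s$, $=-2s$), whereas you stay entirely within Stanley's dominance criterion (\cref{prop:nS:dominance}) by exhibiting the unachievable dominated types $(a,a-1,a-1,2)\trianglelefteq(a,a,a)$, $(a+1,a-1,a-1,2)\trianglelefteq(a+1,a,a)$ for $a\ge3$, and $(a,a,a,2)\trianglelefteq(a+1,a+1,a)$ for $a\ge2$. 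I checked your three non-achievability claims (the forced placement of the largest part followed by the impossibility of splitting the leftover $\{a-1,a-1,2\}$ or $\{a,a,2\}$ into blocks with the required sums) and the partial-sum inequalities, including the boundary values $a=3$ and $a=2$; they all hold, and your $a=2$ instance of the third family reproduces the paper's own treatment of $K_{2,3,3}$ via $\mu=(2^4)$. Your argument is more elementary and arguably cleaner; what the paper's route buys is a demonstration of \cref{thm:s-in-X} in action together with explicit negative upper bounds on a Schur coefficient, and it is the method one must fall back on when, unlike here, every dominated type is achievable (as for $F_{4,6}$). Your proposal in fact shows that for complete tripartite graphs the dominance tool alone suffices, which the paper's exposition does not make apparent.
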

\begin{proof}
The $e$-positivity of the 5 graphs can be seen from the following:
\begin{align*}
X_{K_{1,1,1}}&=X_{K_3}=6e_3,\\
X_{K_{1,1,2}}&=X_{K_4-e}=16e_4+2e_{31},\\
X_{K_{1,2,2}}&=6e_{41}+2e_{32}+70e_5,\\
X_{K_{2,2,2}}&=36e_{51}+6e_{3^2}+384e_6,\quad\text{and}\quad\\
X_{K_{2,2,3}}&=12e_{51^2}+2e_{3^21}+268e_{61}+12e_{52}+4e_{43}+1988e_7.
\end{align*}
Let $G=K_{r,s,t}$ ($1\le r\le s\le t$) be a complete tripartite graph
which is not one of the above $6$ graphs. 
By \cref{lem:ns-pos:tripartiteG}, we can suppose that $t-r\le 1$.
The non-Schur positivity of $K_{2,3,3}$ can be seen 
by taking $\lambda=(3^22)$ and $\mu=(2^4)$ using \cref{prop:nS:dominance}.

Consider the tabloids $T\in\mathcal{T}_{G}(\lambda)$, where
\[
\lambda=(t,\,s-1,\,r-1,\,2)
\]
Besides \cref{lem:tabloid,thm:s-in-X}, we will use the fact that 
\[
N_{G}(T)=1\qquad\text{if $\pi(T)$ is a rearrangement of $r$, $s$ and $t$}.
\]
Since $t-r\le 1$, we have $3$ cases to treat.
\begin{enumerate}
\item
If $s=r=t$, then the only tabloid with even $\abs{W}$ satisfies
\[
\pi=(s,s,s),\quad
W=\{1,2\},\quad\text{and}\quad
\pi^!=6.
\]
One of the other tabloids satisfies
\[
\pi=(s,\,1,\,s-1,\,s),\quad
W=\{1\},\quad
\pi^!=2,\quad\text{and}\quad
N=3s.
\]
Therefore, we obtain $c_\lambda'\le 6-6s<0$.
\item
If $s=r=t-1$, then the only tabloid with even $\abs{W}$ satisfies
\[
\pi=(s,\,s,\,s+1),\quad
W=\{1,2\},\quad\text{and}\quad
\pi^!=2.
\]
On the other hand, one of the other tabloids satisfies
\[
\pi=(s,\,1,\,s-1,\,s+1),\quad
W=\{1\},\quad
\pi^!=1,\quad\text{and}\quad
N=2s.
\]
Therefore, we obtain $c_\lambda'\le 2-2s<0$.
\item
If $s=r+1=t$, then $s\ge 4$, and there are $4$ tabloids in all:
\begin{itemize}
\item
$\pi=(2,\,s-2,\,s-1,\,s)$, $W=\emptyset$, and $\pi^!N=2\binom{s}{2}$.
\item
$\pi=(s-1,\,1,\,s-1,\,s)$, $W=\{1\}$, $\pi^!=2$, and $N=2s$.
\item
$\pi=(2,\,s,\,s-3,\,s)$, $W=\{2\}$, $\pi^!\ge 2$, and  $N=\binom{s-1}{2}$.
\item
$\pi=(s-1,\,s,\,s)$, $W=\{1,2\}$, and $\pi^!=2$.
\end{itemize}
Therefore, we obtain
\[
c_\lambda'\le 2\binom{s}{2}-2s-2\binom{s-1}{2}+2=-2s<0.
\]
\end{enumerate}
This completes the proof.
\end{proof}

\subsection{Non-Schur positivity of squid graphs with unit legs}

A graph is a \emph{squid} if it is connected, unicyclic, 
and has only one vertex of degree greater than~2. 
It is named by Martin, Morin and Wagner~\cite{MMW08}, denoted $\Sq(m;\lambda)$,
where $m\geq 2$,
$\lambda=(\lambda_1,\lambda_2,\dots,\lambda_h)$,
and defined alternatively as a collection of edge-disjoint paths of lengths $\lambda_1,\dots, \lambda_h$, 
respectively, joined at a common endpoint on an $m$-cycle.

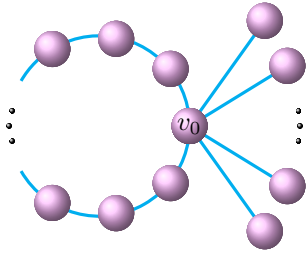
\begin{figure}[htbp]
\begin{tikzpicture}[scale=0.4, rotate=-90]
\coordinate (-3) at (-2.56, -1.56);
\coordinate (-2) at (-2.9, 0.56);
\coordinate (-1) at (-1.9, 2.37);
\coordinate (0)  at (0, 3);
\coordinate (1)  at (1.9, 2.37);
\coordinate (2)  at (2.9, 0.56);
\coordinate (3)  at (2.56, -1.56);
\coordinate (a)  at (-.5, -2.9);
\coordinate (b)  at (0, -3); 
\coordinate (c)  at (.5, -2.9);
\coordinate (4)  at (-3.5, 5+.5);
\coordinate (5)  at (-2, 5.75+.5);
\coordinate (6)  at (2, 5.75+.5);
\coordinate (7)  at (3.5, 5+.5);
\coordinate (d)  at (-.5, 6.12+.5);
\coordinate (e)  at (0, 6.17+.5); 
\coordinate (f)  at (.5, 6.12+.5);
\draw[edge](4)--(0)--(5) (6)--(0)--(7);   
\draw[edge] (1.5, -2.6) arc [start angle=-60, end angle=240, radius=3cm];
\foreach \e in {-3,...,7}\shade[ball](\e) circle(.6);
\shade[ball](0)circle(.6)node{$v_0$};
\foreach \e in {a, b, c, d, e, f}\shade[apball](\e) circle(.08);
\end{tikzpicture}
\caption{The graph $\Sq(m;1^n)$ with $m\ge 4$ and $n\ge 2$.}
\label{fig:Sq:m:1n}
\end{figure}

\begin{theorem}\label{thm:sq:m:1n}
Let $m\ge 3$, $n\ge 2$, and $m\ne 2n-1$.
Then the squid graph $\Sq(m;1^n)$ is not Schur positive.
\end{theorem}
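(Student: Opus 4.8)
The plan is to split on the parity of $m$. When $m$ is even the graph $\Sq(m;1^n)$ is connected and bipartite: its unique bipartition has one block consisting of the $m/2$ cycle vertices at even distance from the hub $v_0$ and the other of the remaining $m/2$ cycle vertices together with all $n$ leaves, so the two blocks differ in size by $n\ge 2$. Hence \cref{thm:nS:bipartite} immediately yields non-Schur positivity. From now on I would assume $m$ is odd and write $m=2k+1$, so that $\alpha(\Sq(m;1^n))=n+k$ and the hypothesis $m\ne 2n-1$ becomes $k\ne n-1$. I would then treat the two ranges $k\le n-2$ (i.e.\ $m<2n-1$) and $k\ge n$ (i.e.\ $m>2n-1$) by different means.

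For the range $m<2n-1$ the tool is Stanley's \cref{prop:nS:dominance}. The key structural fact is that every independent set containing $v_0$ avoids the two cycle-neighbours of $v_0$ and all leaves, so it lies in $\{v_0\}\cup V(P_{m-3})$ and has size at most $1+\ceil{(m-3)/2}=k$. Consequently any type all of whose parts exceed $k$ is non-realizable, because $v_0$ would have to occupy a block of size $>k$. I would take the realizable extreme type $\lambda=(n+k,\,k,\,1)$ (all $n$ leaves together with a maximum independent set of $C_m-v_0$, the $k+1$ remaining vertices split as $(k,1)$) and the type $\mu=(n-1,\,k+1,\,k+1)$. Since $k\le n-2$ gives $n-1\ge k+1$, $\mu$ is a genuine partition of $n+2k+1$ with smallest part $k+1>k$, hence non-realizable, and a one-line check of partial sums gives $\mu\trianglelefteq\lambda$. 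Thus \cref{prop:nS:dominance} applies.

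For the range $m>2n-1$ dominance fails and I would use \cref{thm:s-in-X} with the shape $\lambda=(n+k-1,\,k+1,\,1)$, obtained from the extreme type $\mu_0=(n+k,k,1)$ by lowering a single box. By \cref{lem:tabloid}, only realizable types of length at most three and with all parts at most $\alpha=n+k$ contribute, and a short dominance analysis shows that among these exactly $\mu_0$ and $\lambda$ dominate $\lambda$: indeed $\mu\trianglerighteq\lambda$ forces $\mu_1\in\{n+k-1,\,n+k\}$ together with $\mu_1+\mu_2\ge n+2k$, and since the $k+1$ vertices left over from a maximum independent set span an independent set of size at most $k$, this leaves only $\mu=\mu_0$ or $\mu=\lambda$ (a length-two type being excluded because $G$ is not bipartite). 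The content-$\lambda$ tabloid contributes $+\lambda^!N_G(\lambda)$ with every hook in a single row, while the unique tabloid of shape $\lambda$ and content $\mu_0$ has exactly one hook spanning two rows, so $K^{-1}_{\mu_0,\lambda}=-1$ and it contributes $-\mu_0^!N_G(\mu_0)$. Counting stable partitions---$v_0$ is forced to be the singleton block, the path $C_m-v_0$ has a unique proper $2$-colouring, and the leaves are distributed accordingly---gives $\lambda^!N_G(\lambda)=2n$ and $\mu_0^!N_G(\mu_0)=2(k+1)$, whence
\[
[s_\lambda]X_G=2n-2(k+1)=(2n-1)-m<0.
\]

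The hard part will be the bookkeeping in the range $m>2n-1$: proving that no third realizable type dominates $\lambda$, evaluating the two products exactly (the value $2(k+1)$ rests on the fact that removing a maximum independent set of $P_{2k}$ always leaves a single edge among the $k+1$ surviving vertices), and confirming the rim-hook sign $K^{-1}_{\mu_0,\lambda}=-1$ in general. It is reassuring that the single formula $(2n-1)-m$ is valid for every odd $m$ and vanishes precisely at $m=2n-1$: that is exactly the boundary at which $\lambda$ degenerates and the certifying coefficient becomes $0$, which is why the case $m=2n-1$ can only be left as a conjecture.
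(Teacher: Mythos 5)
Your proof follows the paper's argument essentially verbatim: the same bipartite reduction via \cref{thm:nS:bipartite} for even $m$, the same pair $\lambda=(n+k,\,k,\,1)$ and $\mu=(n-1,\,k+1,\,k+1)$ fed into \cref{prop:nS:dominance} for $m\le 2n-3$, and the same shape $(n+k-1,\,k+1,\,1)$ analyzed with \cref{thm:s-in-X} to get $c_\lambda'=2n-m-1<0$ for $m\ge 2n+1$. If anything, your identification of the two contributing contents as $(n+k-1,\,k+1,\,1)$ and $(n+k,\,k,\,1)$, with weights $2n$ and $2(k+1)$, is stated more carefully than in the paper, whose displayed contents contain an apparent typo (they do not sum to $|V(G)|$) even though its final count agrees with yours.
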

\begin{proof}
Let $G=\Sq(m;1^n)$. Then $G$ has independence number $\alpha=n+\floor{m/2}$.
Let~$v_0$ be the vertex on the cycle that has additional leaves; see \cref{fig:Sq:m:1n}.
If $m$ is even, then $G$ is bipartite and unbalanced. 
By \cref{thm:nS:bipartite}, $G$ is not Schur positive.
Below we can suppose that $m$ is odd. 

If $m\le 2n-3$, then $n-1\ge \ceil{m/2}$ and $\mu\trianglelefteq\lambda$, where
\[
\lambda=\brk1{\alpha,\,\floor{m/2},\,1}
\quad\text{and}\quad
\mu=\brk1{n-1,\,\ceil{m/2},\,\ceil{m/2}}.
\]
Since any stable set containing $v_0$ has at most $\floor{m/2}$ vertices,
$G$ has no stable partitions of type $\mu$. By \cref{prop:nS:dominance}, $G$ is not Schur positive.

When $m\ge 2n+1$, we consider the partition
\[
\lambda=\brk1{\alpha-1,\,\ceil{m/2},\,1}.
\]
Since $G$ is not bipartite, 
in using \cref{thm:s-in-X}, we are restricted to 
tabloids containing at least 3 rim hooks.
By \cref{lem:tabloid}, we find
\[
\pi=\brk1{1,\,\ceil{m/2},\,\ceil{m/2}}\quad\text{or}\quad
\pi=\brk1{1,\,\ceil{m/2}+1,\,\floor{m/2}}.
\]
By \cref{thm:s-in-X}, we obtain 
\[
c_\lambda'=2\binom{n}{n-1}-2\cdot\frac{m+1}{2}=2n-m-1<0.
\]
This completes the proof.
\end{proof}

Dahlberg et al.~\cite{DSvW19X} showed that any $n$-vertex connected graph with a cut vertex whose removal produces at least 
$\max(3,\,\log_2{n}+1)$ components is not $e$-positive.
Using it one obtains immediately that the squid graph $\Sq(2n-1;\,1^n)$ is not $e$-positive. 

\begin{conjecture}\label{conj:sq:2n-1:1n}
The squid graph $\Sq(2n-1;\,1^n)$ is not Schur positive for any $n\ge 3$.
\end{conjecture}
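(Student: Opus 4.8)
The plan is to apply \cref{thm:s-in-X} and to exhibit one shape $\lambda\vdash 3n-1$ for which the Schur coefficient $c_\lambda'=[s_\lambda]X_{\Sq(2n-1;1^n)}$ is strictly negative, in the same spirit as the proofs of \cref{thm:nS:fan,thm:nS:tripartite}. The starting point is to understand why the argument of \cref{thm:sq:m:1n} stalls at $m=2n-1$. Writing $\alpha=2n-1$ for the independence number and repeating the enumeration for the shape $\lambda=(2n-2,\,n,\,1)$, one finds exactly two special rim hook tabloids with $N_G>0$: the ``row'' tabloid with $\pi=(2n-2,\,n,\,1)$ and even sign, and the ``hook'' tabloid with $\pi=(2n-1,\,n-1,\,1)$ and odd sign. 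Their contributions are $+\,N_G(2n-2,n,1)$ and $-\,N_G(2n-1,n-1,1)$, and these two stable-partition counts are equal, so $c_\lambda'=0$. Thus at $m=2n-1$ the positive and negative parts cancel exactly, and the whole problem is to find a shape at which a strictly negative remainder survives.

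The second step is to make the counts $N_G(T)$ fully explicit. Writing $v_0$ for the apex of the squid, every stable partition of $G=\Sq(2n-1;1^n)$ is obtained by first choosing a stable partition of the odd cycle $C_{2n-1}$ and then distributing the $n$ leaves among the blocks other than that of $v_0$ (allowing new blocks consisting solely of leaves). Consequently each $N_G(T)$ is a sum, over the admissible cycle-block-size compositions and over the leaf distributions, of products of binomial coefficients $\binom{n}{j}$ with the classical counts of independent sets of prescribed size in a path; in particular every $N_G(T)$ is an explicit, piecewise-polynomial expression in $n$. By \cref{lem:tabloid}, only tabloids whose rim hooks have length at most $\alpha=2n-1$ and number at most the number of rows of $\lambda$ can occur, so for any fixed shape only boundedly many tabloids enter the sum and all their counts can be written down in closed form.

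The third step is the sign bookkeeping for a well-chosen shape. Since the three-row shapes adjacent to the degenerate one are unhelpful --- $(2n-1,\,n-1,\,1)$ admits only the positive ``row'' tabloid, because the competing ``hook'' tabloid would need a rim hook of length $2n>\alpha$ and is excluded by \cref{lem:tabloid} --- I would pass to a four-row shape such as $\lambda=(2n-2,\,n-1,\,1,\,1)$, which unlocks stable partitions into four blocks and a richer family of tabloids. As in the fan and tripartite proofs, I would pair tabloids whose rim hook lengths determine the same underlying partition; such pairs carry opposite signs and equal magnitudes $\pi^!\,N_G(T)$ and cancel, leaving a short list of unpaired tabloids whose signed sum is to be shown negative.

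The hard part will be precisely this last comparison. Because the natural shape already gives $c_\lambda'=0$, the sign of $c_\lambda'$ at any nearby shape is governed by \emph{second-order} differences between the binomial-sum counts $N_G(T)$ of competing types, so crude bounds that discard the factor $\pi^!$ no longer suffice and the exact counts must be compared. Producing a single shape family $\lambda=\lambda(n)$ whose surviving net count is a manifestly negative expression for \emph{every} $n\ge 3$, while verifying that no positive tabloid contribution overwhelms it, is the genuine obstacle and the reason the statement is posed only as a conjecture. A sensible route is to first confirm the base case $\Sq(5;1^3)$ by the explicit tabloid computation --- consistent with the computer checks already mentioned --- and then to extrapolate the shape and the controlling count to general $n$.
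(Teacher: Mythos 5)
The statement you are addressing is a \emph{conjecture}: the paper itself offers no proof, only supporting evidence, namely a closed form for $c_\lambda'$ at the single shape $\lambda=(n,n,n-1)$ (obtained by ``careful and laborious enumeration'' with \cref{thm:s-in-X}) which is negative precisely for $n\le 9$; the leading behaviour of $P(n)$ makes that coefficient positive for larger $n$, so even the paper's chosen shape does not settle the general case. Your proposal is likewise not a proof, and you say so honestly; so the right comparison is between the two incomplete strategies. Your diagnosis of why \cref{thm:sq:m:1n} stalls at $m=2n-1$ is correct and worth recording: for $\lambda=(2n-2,n,1)$ the four special rim hook tabloids reduce (via \cref{lem:tabloid} and non-bipartiteness) to the two you name, with $\pi^!=1$ for each, and the counts do cancel (e.g.\ for $n=3$ both equal $6$). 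Your observation that $(2n-1,n-1,1)$ admits only the positive tabloid is also right.

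The concrete gap is in your choice of escape shape. The paper's own data refutes $\lambda=(2n-2,n-1,1,1)$ already at the base cases: the only negative Schur coefficient of $X_{\Sq(5;1^3)}$ sits at $(3,3,2)$ and the only negative ones of $X_{\Sq(7;1^4)}$ sit at $(4,4,3)$ and $(3,3,3,2)$, so $c_{(4,2,1,1)}'\ge 0$ and $c_{(6,3,1,1)}'\ge 0$. Hence the ``strictly negative remainder'' you hope to extract from that four-row shape does not exist, and the pairing/cancellation bookkeeping you outline cannot rescue it. The evidence points instead toward near-rectangular shapes such as $(n,n,n-1)$ or $(n-1)^3(n-2)$ --- which is the route the paper takes --- but, as noted, even $(n,n,n-1)$ fails for $n\ge 10$, and finding a shape family whose coefficient is provably negative for all $n\ge 3$ is exactly the open content of the conjecture. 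A secondary, non-load-bearing issue: you assert $N_G(2n-2,n,1)=N_G(2n-1,n-1,1)$ for all $n$ without argument; it is true for $n=3$ and plausible in general, but if you keep it as motivation you should either prove it or label it as a check.
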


The unique term in the chromatic symmetric function of the graph $\Sq(5;1^3)$ 
with negative coefficient is $-8s_{3^22}$.
The chromatic symmetric function of the graph $\Sq(7;1^4)$ has only two terms with negative coefficients, i.e., $-60s_{4^23}-30s_{3^32}$.
Careful and laborious enumeration with the aid of \cref{thm:s-in-X} gives the coefficient~$c_\lambda'$ for $\lambda=(n,\,n,\,n-1)$ by
\[
c_\lambda'=\frac{P(n)}{30n(n+1)(2n-7)(2n-5)(2n-3)}\binom{2n-2}{n-1},
\]
where $P(n)$ is the following polynomial which is not factorizable over $\mathbb{Z}$:
\[
P(n)=n^9-7n^8-96n^7+912n^6-2646n^5+4782n^4-17809n^3+59113n^2-91050n+50400.
\]
Hence $c_\lambda'<0$ and \cref{conj:sq:2n-1:1n} is true for $n\le 9$.

\end{document}